\theoremstyle{plain}
\newtheorem{definition}{Definition}[section]
\newtheorem{thm}[definition]{Theorem}
\newtheorem{lem}[definition]{Lemma}
\newtheorem{cor}[definition]{Corollary}
\newtheorem{rem}[definition]{Remark}
\title{Integer group determinants for ${\rm C}_{2}^{2} \rtimes {\rm C}_{4}$}
\author{Yuka Yamaguchi and Naoya Yamaguchi}
\date{\today}
\begin{document}

\maketitle

\begin{abstract}
For any positive integer $n$, 
let ${\rm C}_{n}$ be the cyclic group of order $n$. 
We determine all possible values of the integer group determinant of ${\rm C}_{2}^{2} \rtimes {\rm C}_{4}$. 
\end{abstract}

\section{Introduction}
For a finite group $G$, 
let $x_{g}$ be an indeterminate for each $g \in G$ and let $\mathbb{Z}[x_{g}]$ be the multivariate polynomial ring in $x_{g}$ over $\mathbb{Z}$. 
The group determinant $\Theta_{G}(x_{g})$ of $G$ was defined by Dedekind as follows: 
$$
\Theta_{G}(x_{g}) := \det{\left( x_{g h^{- 1}} \right)_{g, h \in G}} \in \mathbb{Z}[x_{g}]. 
$$
Frobenius \cite{Frobenius1968gruppen} gave the irreducible factorization of $\Theta_{G}(x_{g})$ over $\mathbb{C}$: 
{\it Let $G$ be a finite group and let $\widehat{G}$ be a complete set of representatives of the equivalence classes of irreducible representations of $G$ over $\mathbb{C}$. Then 
$$
\Theta_{G}(x_{g}) = \prod_{\varphi \in \widehat{G}} \det{\left( \sum_{g \in G} \varphi(g) x_{g} \right)^{\deg{\varphi}}}, 
$$
where $\deg{\varphi}$ is the degree of $\varphi$.}
This is called Frobenius' theorem. 

At the meeting of the American Mathematical Society in Hayward, California, in April 1977, 
Olga Taussky-Todd \cite{Olga} asked whether one could characterize the values of $\Theta_{G}(x_{g})$ when the all entries are integers \cite[p.1]{https://doi.org/10.48550/arxiv.2302.11688}. 
Let $S(G)$ denote the set of all achievable values: 
$$
S(G) := \left\{ \Theta_{G}(x_{g}) \mid x_{g} \in \mathbb{Z} \right\}. 
$$
For some groups, $S(G)$ was determined in \cite{MR3879399, MR2914452, MR624127, MR601702, MR550657, MR4363104, https://doi.org/10.48550/arxiv.2211.09930, https://doi.org/10.48550/arxiv.2302.11688, MR3998922, MR4056860, https://doi.org/10.48550/arxiv.2203.14420, Yamaguchi, https://doi.org/10.48550/arxiv.2211.14761, https://doi.org/10.48550/arxiv.2209.12446, https://doi.org/10.48550/arxiv.2211.01597}. 
As a result, for every group $G$ of order at most $15$, a complete description was obtained for $S(G)$. 
Also, for all five abelian groups of order $16$, and the non-abelian groups ${\rm D}_{16}, {\rm D}_{8} \times {\rm C}_{2}, {\rm Q}_{8} \times {\rm C}_{2}, {\rm Q}_{16}$, 
$S(G)$ have been determined. 
In this paper,  we determine $S \left( {\rm C}_{2}^{2} \rtimes {\rm C}_{4} \right)$, 
where 
$$
{\rm C}_{2}^{2} \rtimes {\rm C}_{4} = \langle g_{1}, g_{2}, g_{3} \mid g_{1}^{2} = g_{2}^{2} = g_{3}^{4} = e, \: g_{1} g_{2} = g_{2} g_{1}, \: g_{1} g_{3} = g_{3} g_{1}, \: g_{2} g_{3} = g_{3} g_{2} g_{1}  \rangle, 
$$
where $e$ is the unit element of G. 
Here, $e$ denotes the unit element. 
\begin{thm}\label{thm:1.1}
We have 
\begin{align*}
S({\rm C}_{2}^{2} \rtimes {\rm C}_{4}) = \left\{ 16 m + 1 \mid m \in \mathbb{Z} \right\} \cup 2^{14} \mathbb{Z}. 
\end{align*}
\end{thm}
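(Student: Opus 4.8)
The plan is to combine Frobenius' theorem with a careful $2$-adic valuation analysis. First I would pin down the representation theory of $G := {\rm C}_{2}^{2} \rtimes {\rm C}_{4}$. Since $g_{1}$ is central and $g_{3}^{-1} g_{2} g_{3} = g_{2} g_{1}$, the commutator subgroup is $\langle g_{1} \rangle$ and $G^{\rm ab} \cong {\rm C}_{2} \times {\rm C}_{4}$, giving eight linear characters (all trivial on $g_{1}$). As $\sum_{\varphi}(\deg\varphi)^{2} = 16$, the remaining irreducibles are two $2$-dimensional representations $\varphi_{1}, \varphi_{2}$, on which the central involution $g_{1}$ must act as $-I$; taking $\varphi_{i}(g_{2}) = \mathrm{diag}(1,-1)$ and $\varphi_{i}(g_{3}) = \left(\begin{smallmatrix} 0 & 1 \\ c & 0\end{smallmatrix}\right)$ with $c = 1$ for $\varphi_{1}$ and $c = -1$ for $\varphi_{2}$ realises both (distinguished by $g_{3}^{2} \mapsto \pm I$).

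Writing $x_{abc}$ for the coefficient of $g_{1}^{a} g_{2}^{b} g_{3}^{c}$ and introducing the central-fibre sums and differences $u_{bc} = x_{0bc} + x_{1bc}$ and $v_{bc} = x_{0bc} - x_{1bc}$ (subject only to $u_{bc} \equiv v_{bc} \pmod{2}$), Frobenius' theorem gives
\[
\Theta_{G} = \Theta_{{\rm C}_{4}}(w^{+})\,\Theta_{{\rm C}_{4}}(w^{-})\,(\det M_{1})^{2} (\det M_{2})^{2},
\]
where $w^{\pm}_{c} = u_{0c} \pm u_{1c}$, where $\Theta_{{\rm C}_{4}}(w) = (w_{0}+w_{1}+w_{2}+w_{3})(w_{0}-w_{1}+w_{2}-w_{3})\big((w_{0}-w_{2})^{2}+(w_{1}-w_{3})^{2}\big)$, and where $\det M_{1}, \det M_{2}$ are the explicit quadratic forms in the $v_{bc}$ arising from $\varphi_{1}, \varphi_{2}$ (with opposite sign patterns). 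The eight linear factors collapse to the abelian determinant $\Theta_{{\rm C}_{2}\times{\rm C}_{4}}(u) = \Theta_{{\rm C}_{4}}(w^{+})\Theta_{{\rm C}_{4}}(w^{-})$ because every linear character is trivial on $g_{1}$.

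Next I would split on the parity $\sigma := \sum_{g} x_{g} \bmod 2$. A short computation shows each factor above is $\equiv \sigma \pmod{2}$, so $\Theta_{G}$ is odd exactly when $\sigma = 1$. In the odd case I would prove $\Theta_{G} \equiv 1 \pmod{16}$ by a finite analysis of the factors modulo $16$ (odd squares lie in $\{1,9\}$, and the abelian factor compensates). In the even case ($\sigma = 0$) I would establish the sharp bound $\nu_{2}(\Theta_{G}) \ge 14$ for the $2$-adic valuation $\nu_{2}$: analysing the three factors $p,q,r$ of each $\Theta_{{\rm C}_{4}}(w^{\pm})$ shows their valuations can never all be minimal, forcing $\nu_{2}(\Theta_{{\rm C}_{4}}(w^{\pm})) \ge 4$; and since the two quadratic forms reduce modulo $4$ to the sign patterns $(+,-,-,+)$ and $(+,-,+,-)$ on the \emph{same} four residues, the events ``$\nu_{2}(\det M_{1}) = 1$'' and ``$\nu_{2}(\det M_{2}) = 1$'' are mutually exclusive, so $\nu_{2}(\det M_{1}) + \nu_{2}(\det M_{2}) \ge 3$. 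Summing gives $\nu_{2}(\Theta_{G}) \ge 8 + 2\cdot 3 = 14$, hence every even value lies in $2^{14}\mathbb{Z}$.

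Finally I would show these bounds are attained. Every value $16m+1$ is realised by a parametric family respecting $u \equiv v \pmod{2}$ (this coupling is exactly what confines the odd values to a single class modulo $16$), and the minimal even value $2^{14}$ is hit by, for example, $x_{e} = x_{g_{2}} = x_{g_{3}^{2}} = x_{g_{1}g_{3}^{2}} = x_{g_{2}g_{3}} = 1$ and $x_{g_{1}g_{2}} = -1$ (all others $0$), for which $\Theta_{{\rm C}_{2}\times{\rm C}_{4}}(u) = 2^{8}$ and $(\det M_{1},\det M_{2}) = (-2,-4)$, so that $\Theta_{G} = 2^{8}\cdot 4 \cdot 16 = 2^{14}$; a one-parameter modification then sweeps out all of $2^{14}\mathbb{Z}$. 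The main obstacle is this final balance: proving the even lower bound is exactly $14$ (and not larger) relies on the anti-correlation between $\det M_{1}$ and $\det M_{2}$, while simultaneously exhibiting inputs that meet the bound and honour the parity constraint $u \equiv v \pmod{2}$ is the most delicate part of the argument.
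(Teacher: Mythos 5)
Your strategy is essentially the paper's: the same factorization $\Theta_{G}=D_{4}(\bm{b})\,D_{4}(\bm{c})\,(\det M_{1})^{2}(\det M_{2})^{2}$ (your $w^{\pm}$ and $v$ are exactly the paper's $\bm{b},\bm{c},\bm{d}$), the same observation that all four factors share the parity of $\sum_{g}x_{g}$, the same valuation count $4+4+2\cdot 3=14$ in the even case, and your explicit witness for $2^{14}$ is correct (it gives $D_{4}(\bm{b})=D_{4}(\bm{c})=16$ and $(\det M_{1},\det M_{2})=(-4,-2)$, hence $2^{14}$). Two steps, however, are genuine gaps rather than omitted routine details.

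First, the odd case. Saying ``odd squares lie in $\{1,9\}$ and the abelian factor compensates'' names the phenomenon but supplies no mechanism, and this compensation is the hardest point of the whole theorem: one must show that $D_{4}(\bm{b})D_{4}(\bm{c})$ and $(\det M_{1}\det M_{2})^{2}$ land in the \emph{same} class modulo $16$, even though one is built from the sums $a_{i}+a_{i+8}$ and the other from the differences $d_{i}=a_{i}-a_{i+8}$. The paper's Lemmas~3.3 and~3.4 do this by proving that \emph{both} quantities are congruent to $1-8(d_{0}d_{2}+d_{1}d_{3}+d_{4}d_{6}+d_{5}d_{7})$ modulo $16$, so the product is $(1-8T)^{2}\equiv 1$. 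Without identifying such a shared invariant linking the abelian and non-abelian factors, your ``finite analysis'' has no tractable finite structure to analyze.

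Second, the claim that ``a one-parameter modification then sweeps out all of $2^{14}\mathbb{Z}$'' would fail. For a line $\bm{a}_{0}+m\bm{v}$ to make $\Theta_{G}$ linear in $m$ (a necessary condition for surjectivity onto an arithmetic progression), $\bm{v}$ must be a multiple of the all-ones vector, and then $\Theta_{G}=(16\lambda m+\Sigma)\,Q$ with $\Sigma=\sum a_{i}$ and $Q$ the product of the remaining factors. Covering $2^{14}\mathbb{Z}$ forces $|Q|\,|\lambda|=2^{10}$; but in the even case $\nu_{2}(Q)\geq 10$ already from $F(\bm{d})^{2}D_{4}(\bm{c})$, so $\lambda=\pm 1$ and the odd part of $Q$ is $\pm 1$, which forces $b_{0}-b_{1}+b_{2}-b_{3}=\pm 1$ and hence $\Sigma$ odd --- contradicting evenness. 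This is why the paper exhibits \emph{four} separate families, realizing $2^{14}(4m+1)$, $-2^{14}(4m+1)$, $2^{15}(2m+1)$ and $2^{16}m$, one for each residue of $k$ modulo $4$ in $2^{14}k$; constructing these is a substantive part of the proof, not a perturbation of a single example.
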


\section{Preliminaries}
For any $\overline{r} \in {\rm C}_{4}$ with $r \in \{ 0, 1, 2, 3 \}$, 
we denote the variable $x_{\overline{r}}$ by $x_{r}$, 
and let 
$$
D_{4}(x_{0}, x_{1}, x_{2}, x_{3}) := \det{\left( x_{g h^{- 1}} \right)}_{g, h \in {\rm C}_{4}}. 
$$
For any $(\overline{r}, \overline{s}) \in {\rm C}_{4} \times {\rm C}_{2}$ with $r \in \{ 0, 1, 2, 3 \}$ and $s \in \{ 0, 1 \}$, 
we denote the variable $y_{(\overline{r}, \overline{s})}$ by $y_{j}$, where $j := r + 4 s$, 
and let 
$$
D_{4 \times 2}(y_{0}, y_{1}, \ldots, y_{7}) := \det{\left( y_{g h^{- 1}} \right)}_{g, h \in {\rm C}_{4} \times {\rm C}_{2}}. 
$$
From the $H = {\rm C}_{4}$ and $K = {\rm C}_{2}$ case of \cite[Theorem~1.1]{https://doi.org/10.48550/arxiv.2202.06952}, 
we have the following corollary. 

\begin{cor}\label{cor:2.1}
We have 
$$
D_{4 \times 2}(y_{0}, y_{1}, \ldots, y_{7}) = D_{4}(y_{0} + y_{4}, y_{1} + y_{5}, y_{2} + y_{6}, y_{3} + y_{7}) D_{4}(y_{0} - y_{4}, y_{1} - y_{5}, y_{2} - y_{6}, y_{3} - y_{7}). 
$$
\end{cor}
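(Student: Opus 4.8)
The plan is to obtain Corollary~\ref{cor:2.1} as a direct specialization of \cite[Theorem~1.1]{https://doi.org/10.48550/arxiv.2202.06952} to the case $H = {\rm C}_{4}$ and $K = {\rm C}_{2}$, so that the only real work is matching that theorem's variable conventions to the indexing $j = r + 4s$ fixed above. That general theorem gives a factorization of the group determinant of a product $H \times K$ with $K$ abelian as a product, running over the characters $\chi$ of $K$, of copies of the group determinant of $H$ in which the variable attached to $h \in H$ is replaced by the $\chi$-twisted sum $\sum_{k \in K} \chi(k)\, y_{(h,k)}$. Taking $H = {\rm C}_{4}$ and $K = {\rm C}_{2}$, the dual group $\widehat{{\rm C}_{2}}$ consists of exactly two characters: the trivial character $\chi_{0}$ and the nontrivial character $\chi_{1}$ determined by $\chi_{1}(\overline{1}) = -1$.

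First I would record, for each $r \in \{0,1,2,3\}$, the $\chi_{0}$- and $\chi_{1}$-twisted sums. Under the convention $y_{(\overline{r}, \overline{s})} = y_{r + 4s}$, we have $y_{(\overline{r}, \overline{0})} = y_{r}$ and $y_{(\overline{r}, \overline{1})} = y_{r+4}$; hence the $\chi_{0}$-sum is $y_{r} + y_{r+4}$ and the $\chi_{1}$-sum is $y_{r} - y_{r+4}$. Substituting these into $D_{4}$ produces the two factors $D_{4}(y_{0} + y_{4}, y_{1} + y_{5}, y_{2} + y_{6}, y_{3} + y_{7})$ and $D_{4}(y_{0} - y_{4}, y_{1} - y_{5}, y_{2} - y_{6}, y_{3} - y_{7})$, whose product is precisely the right-hand side of the claimed identity. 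Here one must only check that the ordering $0, 1, 2, 3$ of the ${\rm C}_{4}$-variables used to define $D_{4}$ agrees with the $r$-ordering inside the combined index, which holds by construction.

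Since the cited theorem does the heavy lifting, I expect no serious obstacle; the single point demanding care is the bookkeeping of the index $j = r + 4s$ together with the two characters of ${\rm C}_{2}$, ensuring that the signs and the argument order in each $D_{4}$ come out correctly. If a self-contained argument were preferred instead, I would decompose $\mathbb{C}[{\rm C}_{4} \times {\rm C}_{2}] \cong \mathbb{C}[{\rm C}_{4}] \otimes \mathbb{C}[{\rm C}_{2}]$ and use the central idempotents $e_{\pm} = \tfrac{1}{2}(e \pm \overline{1})$ of $\mathbb{C}[{\rm C}_{2}]$ to split the algebra into two copies of $\mathbb{C}[{\rm C}_{4}]$. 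Writing $D_{4 \times 2}$ as the determinant of left multiplication by $\sum_{(h,k)} y_{(h,k)}\,(h,k)$ on the regular representation, multiplicativity of the determinant across this two-block decomposition yields the product of the determinants of left multiplication on each $\mathbb{C}[{\rm C}_{4}]$-block, where $\overline{1} \in {\rm C}_{2}$ acts as $+1$ on one block and $-1$ on the other; this reproduces the coefficients $y_{r} \pm y_{r+4}$ and hence the same two $D_{4}$ factors.
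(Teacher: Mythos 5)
Your proposal matches the paper exactly: the paper gives no separate proof of Corollary~\ref{cor:2.1} but simply invokes the $H = {\rm C}_{4}$, $K = {\rm C}_{2}$ case of the cited factorization theorem, which is precisely your main argument, and your index/character bookkeeping is correct. The self-contained idempotent argument you sketch is a fine optional addition but is not needed.
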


Let $G := {\rm C}_{2}^{2} \rtimes {\rm C}_{4}$. 
For any $g = g_{1}^{r} g_{2}^{s} g_{3}^{t} \in G$ with $r, s \in \{ 0, 1 \}$ and $t \in \{ 0, 1, 2, 3 \}$, 
we denote the variable $z_{g}$ by $z_{j}$, where $j := t + 4 s + 8 r$, 
and let 
$$
D_{G}(z_{0}, z_{1}, \ldots, z_{15}) := \det{(z_{g h^{- 1}})_{g, h \in G}}. 
$$
Let $[x] := \max{ \{ n \in \mathbb{Z} \mid n \leq x \} }$ and $\widehat{G} := \left\{ \varphi_{k} \mid 0 \leq k \leq 9 \right\}$, 
where 
\begin{align*}
\varphi_{k}(g_{1}) := 1, \quad \varphi_{k}(g_{2}) := (- 1)^{[ \frac{k}{4} ]}, \quad \varphi_{k}(g_{3}) := \sqrt{- 1}^{k} 
\end{align*}
for any $0 \leq k \leq 7$ and 
\begin{align*}
\varphi_{k}(g_{1}) := - \begin{pmatrix} 1 & 0 \\ 0 & 1 \end{pmatrix}, \quad
\varphi_{k}(g_{2}) := (- 1)^{k} \begin{pmatrix} 0 & 1 \\ 1 & 0 \end{pmatrix}, \quad
\varphi_{k}(g_{3}) := \sqrt{- 1}^{k + 1} \begin{pmatrix} 1 & 0 \\ 0 & - 1 \end{pmatrix}  
\end{align*}
for any $8 \leq k \leq 9$. 
Then, $\widehat{G}$ is a complete set of representatives of the equivalence classes of irreducible representations of $G$ over $\mathbb{C}$. 
Let $F$ be the eight variable polynomial defined by 
$$
F(w_{0}, w_{1}, \ldots, w_{7}) := f(w_{0} - w_{2}, w_{1} - w_{3}, w_{4} - w_{6}, w_{5} - w_{7}) f(w_{5} + w_{7}, w_{0} + w_{2}, w_{1} + w_{3}, w_{4} + w_{6}),
$$
where $f(x, y, z, w) := x^{2} + y^{2} - z^{2} - w^{2}$. 
Then we have the following. 

\begin{lem}\label{lem:2.2}
We have 
$$
D_{G}(z_{0}, z_{1}, \ldots, z_{15}) = D_{4 \times 2}(z_{0} + z_{8}, z_{1} + z_{9}, \ldots, z_{7} + z_{15}) F(z_{0} - z_{8}, z_{1} - z_{9}, \ldots, z_{7} - z_{15})^{2}. 
$$
\end{lem}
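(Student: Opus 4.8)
The plan is to apply Frobenius' theorem to $G$ with the given complete set $\widehat{G} = \{\varphi_0, \ldots, \varphi_9\}$, which yields
$$
D_{G}(z_0, \ldots, z_{15}) = \prod_{k=0}^{7} \left( \sum_{g \in G} \varphi_k(g)\, z_g \right) \prod_{k=8}^{9} \det\left( \sum_{g \in G} \varphi_k(g)\, z_g \right)^{2},
$$
and to identify the first product with $D_{4 \times 2}(z_0 + z_8, \ldots, z_7 + z_{15})$ and the second with $F(z_0 - z_8, \ldots, z_7 - z_{15})^{2}$. Throughout I write $i = \sqrt{-1}$ and use that every element of $G$ is written uniquely as $g_1^{r} g_2^{s} g_3^{t}$ with $r, s \in \{0,1\}$ and $t \in \{0,1,2,3\}$, so that $z_{g_1^{r} g_2^{s} g_3^{t}} = z_{t + 4s + 8r}$.

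First I would treat the eight degree-one factors $\varphi_0, \ldots, \varphi_7$. Since $\varphi_k(g_1) = 1$ for these $k$, the value $\varphi_k(g_1^{r} g_2^{s} g_3^{t}) = \varphi_k(g_2)^{s} \varphi_k(g_3)^{t}$ is independent of $r$, so summing each $g_1$-fiber over $r \in \{0,1\}$ gives
$$
\sum_{g \in G} \varphi_k(g)\, z_g = \sum_{s=0}^{1} \sum_{t=0}^{3} \varphi_k(g_2)^{s} \varphi_k(g_3)^{t} \left( z_{t+4s} + z_{t+4s+8} \right).
$$
As $k$ runs over $0, \ldots, 7$, the pairs $(\varphi_k(g_3), \varphi_k(g_2)) = (i^{k}, (-1)^{[k/4]})$ run over all eight irreducible characters of the abelian quotient $G / \langle g_1 \rangle \cong {\rm C}_{4} \times {\rm C}_{2}$ (here $g_2 g_3 = g_3 g_2 g_1 \equiv g_3 g_2$). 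Hence, by Frobenius' theorem applied to ${\rm C}_{4} \times {\rm C}_{2}$ with the variable attached to $g_3^{t} g_2^{s}$ set equal to $z_{t+4s} + z_{t+4s+8}$, the product $\prod_{k=0}^{7} \bigl( \sum_{g} \varphi_k(g)\, z_g \bigr)$ equals $D_{4 \times 2}(z_0 + z_8, \ldots, z_7 + z_{15})$.

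Next I would treat the two degree-two factors. Because $\varphi_k(g_1) = -I$ for $k \in \{8,9\}$, we have $\varphi_k(g_1)^{r} = (-1)^{r} I$, so summing each $g_1$-fiber now introduces the differences $w_j := z_j - z_{j+8}$, giving
$$
M_k := \sum_{g \in G} \varphi_k(g)\, z_g = \sum_{s=0}^{1} \sum_{t=0}^{3} \varphi_k(g_2)^{s} \varphi_k(g_3)^{t}\, w_{t+4s}.
$$
Substituting the explicit matrices $\varphi_8(g_3)^{t} = \diag(i^{t}, (-i)^{t})$ and $\varphi_8(g_2) = \begin{pmatrix} 0 & 1 \\ 1 & 0 \end{pmatrix}$ (and the analogous data $\varphi_9(g_3)^{t} = \diag((-1)^{t}, 1)$, $\varphi_9(g_2) = \begin{pmatrix} 0 & -1 \\ -1 & 0 \end{pmatrix}$), I would assemble $M_8$ and $M_9$ entrywise and expand their $2 \times 2$ determinants. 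The expected outcome is $\det M_8 = f(w_0 - w_2, w_1 - w_3, w_4 - w_6, w_5 - w_7)$ and $\det M_9 = f(w_5 + w_7, w_0 + w_2, w_1 + w_3, w_4 + w_6)$, so that $\det M_8 \cdot \det M_9 = F(w_0, \ldots, w_7)$; the degree-two contribution is therefore $F(z_0 - z_8, \ldots, z_7 - z_{15})^{2}$, and multiplying the two products yields the claim.

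The hard part will be the explicit determinant computation in the last step: one must collect the terms of each $M_k$ correctly, factor the off-diagonal and diagonal entries of $M_8$ as conjugate linear forms (so that $\det M_8 = A\overline{A} - B\overline{B}$ with $A = w_0 + i w_1 - w_2 - i w_3$ and $B = w_4 + i w_5 - w_6 - i w_7$), and similarly rearrange $\det M_9$, finally verifying that the resulting real quadratic forms match the two factors of $f$ in the prescribed — and slightly permuted — argument order defining $F$. Everything else is bookkeeping governed by Frobenius' theorem and the structure of $G / \langle g_1 \rangle$.
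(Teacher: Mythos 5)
Your proposal follows essentially the same route as the paper's proof: apply Frobenius' theorem, identify the product over the eight linear characters (which kill $g_1$) with $D_{4\times 2}$ evaluated at the sums $z_j+z_{j+8}$ via the dual group of ${\rm C}_4\times{\rm C}_2$, and compute the two $2\times 2$ determinants $\det M_8$ and $\det M_9$ in the differences $z_j-z_{j+8}$ to recover the two factors of $F$. Your stated intermediate data (the explicit matrices, the factorization $\det M_8=A\overline{A}-B\overline{B}$, and the resulting quadratic forms) all check out against the paper's computation, so the plan is correct.
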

\begin{proof}
From Frobenius' theorem, we have 
\begin{align*}
D_{G}(z_{0}, z_{1}, \ldots, z_{15}) = \prod_{\varphi \in \widehat{G}} \det{\left( \sum_{g \in G} \varphi(g) z_{g} \right)}^{\deg{\varphi}} = \prod_{k = 0}^{9} \det{M_{k}}^{\deg{\varphi_{k}}}, 
\end{align*}
where $M_{k} := \sum_{0 \leq r \leq 1} \sum_{0 \leq s \leq 1} \sum_{0 \leq t \leq 3} \varphi_{k}(g_{1}^{r} g_{2}^{s} g_{3}^{t}) z_{t + 4 s + 8 r}$. 
Note that $\{ \chi_{k} \mid 0 \leq k \leq 7 \}$ is the dual group of ${\rm C}_{4} \times {\rm C}_{2}$, where 
$\chi_{k}( (\overline{1}, \overline{0}) ) := \sqrt{- 1}^{k}$ and $\chi_{k}( (\overline{0}, \overline{1}) ) := (- 1)^{[\frac{k}{4}]}$ for $0 \leq k \leq 7$. 
Then we have 
\begin{align*}
\prod_{k = 0}^{7} \det{M_{k}}^{\deg{\varphi_{k}}} 
&= \prod_{k = 0}^{7} M_{k} \\ 
&= \prod_{k = 0}^{7} \sum_{s = 0}^{1} \sum_{t = 0}^{3} (- 1)^{[\frac{k}{4}] s} \sqrt{- 1}^{k t} (z_{t + 4 s} + z_{t + 4 s + 8}) \\
&= \prod_{k = 0}^{7} \sum_{s = 0}^{1} \sum_{t = 0}^{3} \chi_{k}( (\overline{t}, \overline{s}) ) (z_{t + 4 s} + z_{t + 4 s + 8}) \\ 
&= D_{4 \times 2}(z_{0} + z_{8}, z_{1} + z_{9}, \ldots, z_{7} + z_{15}). 
\end{align*}
Also, for any $8 \leq k \leq 9$, we have 
\begin{align*}
M_{k} 
&= \sum_{s = 0}^{1} \sum_{t = 0}^{3} (- 1)^{k s} \sqrt{- 1}^{(k + 1) t} (z_{t + 4 s} - z_{t + 4 s + 8}) 
\begin{pmatrix} 0 & 1 \\ 1 & 0 \end{pmatrix}^{s} \begin{pmatrix} 1 & 0 \\ 0 & - 1 \end{pmatrix}^{t} \\ 
&= \sum_{t = 0}^{3} \left\{ \sqrt{- 1}^{(k + 1) t} (z_{t} - z_{t + 8}) \begin{pmatrix} 1 & 0 \\ 0 & (- 1)^{t} \end{pmatrix} + (- 1)^{k} \sqrt{- 1}^{(k + 1) t} (z_{t + 4} - z_{t + 12}) \begin{pmatrix} 0 & (- 1)^{t} \\ 1 & 0 \end{pmatrix} \right\} \\ 
&= 
\begin{pmatrix} 
\sum_{t = 0}^{3} \sqrt{- 1}^{(k + 1) t} \tilde{z}_{t} & \sum_{t = 0}^{3} (- 1)^{k + t} \sqrt{- 1}^{(k + 1) t} \tilde{z}_{t + 4} \\ 
\sum_{t = 0}^{3} (- 1)^{k} \sqrt{- 1}^{(k + 1) t} \tilde{z}_{t + 4} & \sum_{t = 0}^{3} (- 1)^{t} \sqrt{- 1}^{(k + 1) t} \tilde{z}_{t} 
\end{pmatrix}, 
\end{align*}
where $\tilde{z}_{t} := z_{t} - z_{t + 8}$ for $0 \leq t \leq 7$. 
Therefore, 
\begin{align*}
\det{M_{k}} 
&= \sum_{t = 0}^{3} \sqrt{- 1}^{(k + 1) t} \tilde{z}_{t} \sum_{u = 0}^{3} (- 1)^{u} \sqrt{- 1}^{(k + 1) u} \tilde{z}_{u} \\ 
&\qquad - \sum_{t = 0}^{3} \sqrt{- 1}^{(k + 1) t} \tilde{z}_{t + 4} \sum_{u = 0}^{3} (- 1)^{u} \sqrt{- 1}^{(k + 1) u} \tilde{z}_{u + 4} \\ 
&= \left\{ \tilde{z}_{0} + (- 1)^{k + 1} \tilde{z}_{2} \right\}^{2} - (- 1)^{k + 1} \left\{ \tilde{z}_{1} + (- 1)^{k + 1} \tilde{z}_{3} \right\}^{2} \\
&\qquad - \left\{ \tilde{z}_{4} + (- 1)^{k + 1} \tilde{z}_{6} \right\}^{2} + (- 1)^{k + 1} \left\{ \tilde{z}_{5} + (- 1)^{k + 1} \tilde{z}_{7} \right\}^{2}. 
\end{align*}
That is, 
\begin{align*}
\det{M_{8}} = f(\tilde{z}_{0} - \tilde{z}_{2}, \tilde{z}_{1} - \tilde{z}_{3}, \tilde{z}_{4} - \tilde{z}_{6}, \tilde{z}_{5} - \tilde{z}_{7}), \quad 
\det{M_{9}} = f(\tilde{z}_{5} + \tilde{z}_{7}, \tilde{z}_{0} + \tilde{z}_{2}, \tilde{z}_{1} + \tilde{z}_{3}, \tilde{z}_{4} + \tilde{z}_{6}). 
\end{align*} 
This completes the proof. 
\end{proof}

Throughout this paper, 
we assume that $a_{0}, a_{1}, \ldots, a_{15} \in \mathbb{Z}$ and let 
\begin{align*}
&&b_{i} &:= (a_{i} + a_{i + 8}) + (a_{i + 4} + a_{i + 12}) && (0 \leq i \leq 3), \\
&&c_{i} &:= (a_{i} + a_{i + 8}) - (a_{i + 4} + a_{i + 12}) && (0 \leq i \leq 3), \\
&&d_{i} &:= a_{i} - a_{i + 8} && (0 \leq i \leq 7). &
\end{align*}
Also, let 
\begin{align*}
\bm{a} := (a_{0}, a_{1}, \ldots, a_{15}), \quad 
\bm{b} := (b_{0}, b_{1}, b_{2}, b_{3}), \quad 
\bm{c} := (c_{0}, c_{1}, c_{2}, c_{3}), \quad 
\bm{d} := (d_{0}, d_{1}, \ldots, d_{7}). 
\end{align*}
Then, from Corollary~$\ref{cor:2.1}$ and Lemma~$\ref{lem:2.2}$, 
we have 
\begin{align*}
D_{G}(\bm{a}) 
&= D_{4 \times 2}(a_{0} + a_{8}, a_{1} + a_{9}, \ldots, a_{7} + a_{15}) F(a_{0} - a_{8}, a_{1} - a_{9}, \ldots, a_{7} - a_{15})^{2} \\ 
&= D_{4}(\bm{b}) D_{4}(\bm{c}) F(\bm{d})^{2}. 
\end{align*}

\begin{rem}\label{rem:2.3}
For any $0 \leq i \leq 3$, we have $b_{i} \equiv c_{i} \equiv d_{i} + d_{i + 4} \pmod{2}$. 
\end{rem}

\begin{lem}\label{lem:2.4}
The following hold: 
\begin{enumerate}
\item[$(1)$] $D_{4}(\bm{b}) \equiv b_{0} + b_{1} + b_{2} + b_{3} \pmod{2}$; 
\item[$(2)$] $D_{4}(\bm{c}) \equiv c_{0} + c_{1} + c_{2} + c_{3} \pmod{2}$; 
\item[$(3)$] $F(\bm{d}) \equiv d_{0} + d_{1} + \cdots + d_{7} \pmod{2}$. 
\end{enumerate}
\end{lem}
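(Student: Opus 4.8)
The plan is to reduce everything modulo $2$, where the one governing simplification is that for any integer $a$ we have $a \equiv -a$ and $a^{2} \equiv a \pmod{2}$; in particular signs are irrelevant and every square may be dropped. The single most useful consequence is that for integers $x, y, z, w$,
\[
f(x, y, z, w) = x^{2} + y^{2} - z^{2} - w^{2} \equiv x + y + z + w \pmod{2}.
\]

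I would dispatch $(3)$ first, as it is the most direct. By definition $F(\bm{d})$ is the product of $f(d_{0} - d_{2}, d_{1} - d_{3}, d_{4} - d_{6}, d_{5} - d_{7})$ and $f(d_{5} + d_{7}, d_{0} + d_{2}, d_{1} + d_{3}, d_{4} + d_{6})$. Applying the displayed reduction to each factor, and observing that in each factor the four arguments together involve $d_{0}, d_{1}, \ldots, d_{7}$ each exactly once (with signs that are immaterial mod $2$), both factors become $d_{0} + d_{1} + \cdots + d_{7}$. Hence $F(\bm{d}) \equiv (d_{0} + \cdots + d_{7})^{2} \equiv d_{0} + \cdots + d_{7} \pmod{2}$, which is $(3)$.

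For $(1)$ and $(2)$ I would first record the factorization of the ${\rm C}_{4}$-determinant supplied by Frobenius' theorem applied to the four characters $\chi_{k}(\overline{1}) = \sqrt{-1}^{\,k}$ of ${\rm C}_{4}$; grouping the conjugate pair $k = 1, 3$ gives
\[
D_{4}(x_{0}, x_{1}, x_{2}, x_{3}) = (x_{0} + x_{1} + x_{2} + x_{3})(x_{0} - x_{1} + x_{2} - x_{3})\left\{ (x_{0} - x_{2})^{2} + (x_{1} - x_{3})^{2} \right\}.
\]
Reducing modulo $2$ and using $a^{2} \equiv a$ on the quadratic factor, all three factors collapse to $x_{0} + x_{1} + x_{2} + x_{3}$, so $D_{4} \equiv (x_{0} + x_{1} + x_{2} + x_{3})^{3} \equiv x_{0} + x_{1} + x_{2} + x_{3} \pmod{2}$; specializing at $\bm{b}$ and $\bm{c}$ gives $(1)$ and $(2)$. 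Equivalently, one can argue uniformly: since ${\rm C}_{4}$ is a $2$-group, the augmentation ideal of $\mathbb{F}_{2}[{\rm C}_{4}]$ is nilpotent, so multiplication by $\sum_{i} x_{i} g^{i}$ (with $g$ a generator) has every eigenvalue equal to the augmentation $\sum_{i} x_{i}$, whence $D_{4} \equiv (\sum_{i} x_{i})^{4} \equiv \sum_{i} x_{i} \pmod{2}$.

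There is no genuine obstacle here; the whole statement is a routine reduction modulo $2$. The only points meriting a little care are $(i)$ that $a^{2} \equiv a \pmod{2}$ is invoked at the level of the integer values being substituted (it fails as a polynomial identity, so the congruences must be read as statements about the integers $b_{i}, c_{i}, d_{i}$), and $(ii)$ in part $(3)$, the bookkeeping check that the eight arguments of the two $f$-factors together account for each of $d_{0}, \ldots, d_{7}$ exactly once.
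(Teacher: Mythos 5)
Your proposal is correct and follows essentially the same route as the paper: both reduce $f$ and the factored form of $D_{4}$ modulo $2$ using $a^{2}\equiv a \pmod 2$ for integers, the only cosmetic difference being that you split $(x_{0}+x_{2})^{2}-(x_{1}+x_{3})^{2}$ into its two linear factors while the paper keeps it as a difference of squares. The supplementary augmentation-ideal argument is a valid (and pleasant) alternative, but it is not needed and does not change the substance.
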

\begin{proof}
We obtain (1) and (2) from the following: for any $x_{0}, x_{1}, x_{2}, x_{3} \in \mathbb{Z}$, 
\begin{align*}
D_{4}(x_{0}, x_{1}, x_{2}, x_{3}) 
&= \left\{ (x_{0} + x_{2})^{2} - (x_{1} + x_{3})^{2} \right\} \left\{ (x_{0} - x_{2})^{2} + (x_{1} - x_{3})^{2} \right\} \\ 
&\equiv (x_{0}^{2} + x_{2}^{2} + x_{1}^{2} + x_{3}^{2})^{2} \\ 
&\equiv x_{0} + x_{1} + x_{2} + x_{3} \pmod{2}. 
\end{align*}
We prove (3). 
Since $f(x, y, z, w) = x^{2} + y^{2} - z^{2} - w^{2} \equiv x + y + z + w \pmod{2}$ for any $x, y, z, w \in \mathbb{Z}$, 
we have 
\begin{align*}
F(\bm{d}) 
&= f(d_{0} - d_{2}, d_{1} - d_{3}, d_{4} - d_{6}, d_{5} - d_{7}) f(d_{5} + d_{7}, d_{0} + d_{2}, d_{1} + d_{3}, d_{4} + d_{6}) \\ 
&\equiv (d_{0} + d_{1} + \cdots + d_{7})^{2} \\ 
&\equiv d_{0} + d_{1} + \cdots + d_{7} \pmod{2}. 
\end{align*}
\end{proof}

From Remark~$\ref{rem:2.3}$ and Lemma~$\ref{lem:2.4}$, 
we have the following lemma. 

\begin{lem}\label{lem:2.5}
We have $D_{G}(\bm{a}) \equiv D_{4}(\bm{b}) \equiv D_{4}(\bm{c}) \equiv F(\bm{d}) \pmod{2}$. 
\end{lem}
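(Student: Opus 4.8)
The plan is to combine the factorization $D_{G}(\bm{a}) = D_{4}(\bm{b}) D_{4}(\bm{c}) F(\bm{d})^{2}$ established just before the statement with the parity formulas of Lemma~\ref{lem:2.4} and the term-by-term congruences of Remark~\ref{rem:2.3}. The whole argument is a matter of reducing everything to sums of the $b_{i}, c_{i}, d_{i}$ modulo $2$ and then matching those sums.

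First I would prove the three-way congruence $D_{4}(\bm{b}) \equiv D_{4}(\bm{c}) \equiv F(\bm{d}) \pmod{2}$. By Lemma~\ref{lem:2.4}(1) and (2) we have $D_{4}(\bm{b}) \equiv \sum_{i=0}^{3} b_{i}$ and $D_{4}(\bm{c}) \equiv \sum_{i=0}^{3} c_{i} \pmod{2}$, and Remark~\ref{rem:2.3} gives $b_{i} \equiv c_{i} \pmod{2}$ for each $i$, so summing yields $D_{4}(\bm{b}) \equiv D_{4}(\bm{c}) \pmod{2}$. Likewise Remark~\ref{rem:2.3} gives $b_{i} \equiv d_{i} + d_{i+4} \pmod{2}$, whence $\sum_{i=0}^{3} b_{i} \equiv \sum_{i=0}^{3}(d_{i} + d_{i+4}) = \sum_{i=0}^{7} d_{i} \pmod{2}$; by Lemma~\ref{lem:2.4}(3) the right-hand side is $F(\bm{d})$ modulo $2$, so $D_{4}(\bm{b}) \equiv F(\bm{d}) \pmod{2}$.

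It then remains to bring $D_{G}(\bm{a})$ into the chain. Since $x^{2} \equiv x \pmod{2}$ for every integer $x$, the squared factor collapses: $F(\bm{d})^{2} \equiv F(\bm{d}) \pmod{2}$, so $D_{G}(\bm{a}) \equiv D_{4}(\bm{b}) D_{4}(\bm{c}) F(\bm{d}) \pmod{2}$. By the previous paragraph the three factors share a common parity $\varepsilon \in \{0,1\}$, and a product of three copies of the same parity satisfies $\varepsilon^{3} \equiv \varepsilon \pmod{2}$, which is exactly that common value. Hence $D_{G}(\bm{a}) \equiv \varepsilon \equiv D_{4}(\bm{b}) \equiv D_{4}(\bm{c}) \equiv F(\bm{d}) \pmod{2}$, completing the argument.

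I expect no genuine obstacle here, as everything is routine parity bookkeeping; the only point that requires a moment's care is that the square in the factorization must be reduced via $F(\bm{d})^{2} \equiv F(\bm{d})$ \emph{before} multiplying the parities, after which the collapse $\varepsilon^{3} \equiv \varepsilon$ delivers the desired congruence for $D_{G}(\bm{a})$.
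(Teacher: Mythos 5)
Your proof is correct and is essentially the argument the paper intends: the paper gives no written proof, simply deducing the lemma "from Remark~2.3 and Lemma~2.4" together with the factorization $D_{G}(\bm{a}) = D_{4}(\bm{b}) D_{4}(\bm{c}) F(\bm{d})^{2}$, which is exactly the parity bookkeeping you carry out. No issues.
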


\section{Impossible integers}\label{section3}
In this section, 
we consider impossible integers. 
Let $\mathbb{Z}_{\rm odd}$ be the set of all odd numbers. 

\begin{lem}\label{lem:3.1}
We have $S({\rm C}_{2}^{2} \rtimes {\rm C}_{4}) \cap \mathbb{Z}_{\rm odd} \subset \{ 16 m + 1 \mid m \in \mathbb{Z} \}$. 
\end{lem}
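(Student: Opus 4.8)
The plan is to exploit the factorization $D_{G}(\bm{a}) = D_{4}(\bm{b}) D_{4}(\bm{c}) F(\bm{d})^{2}$ established before Remark~\ref{rem:2.3}, together with Lemma~\ref{lem:2.5}. Suppose $D_{G}(\bm{a})$ is odd. Then Lemma~\ref{lem:2.5} forces each of $D_{4}(\bm{b})$, $D_{4}(\bm{c})$, and $F(\bm{d})$ to be odd, so $F(\bm{d})^{2}$ is an odd square and hence $F(\bm{d})^{2} \equiv 1 \pmod{8}$. The task is to upgrade this to $D_{G}(\bm{a}) \equiv 1 \pmod{16}$, i.e.\ to show $D_{G}(\bm{a}) \in \{16m+1\}$.

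First I would prove the coarser statement $D_{4}(\bm{b}) D_{4}(\bm{c}) \equiv 1 \pmod{8}$. Using the identity $D_{4}(x_{0},x_{1},x_{2},x_{3}) = \{(x_{0}+x_{2})^{2} - (x_{1}+x_{3})^{2}\}\{(x_{0}-x_{2})^{2} + (x_{1}-x_{3})^{2}\}$ from the proof of Lemma~\ref{lem:2.4} and the congruences $b_{i} \equiv c_{i} \pmod{2}$ of Remark~\ref{rem:2.3}, oddness of $D_{4}(\bm{b})$ forces exactly one of $b_{0}+b_{2}$, $b_{1}+b_{3}$ to be odd, with the same choice for $\bm{c}$. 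A short case analysis on the residues of $b_{1},b_{3}$ (equivalently $b_{0},b_{2}$) modulo $4$ then shows $D_{4}(\bm{b}) \equiv D_{4}(\bm{c}) \pmod{8}$; since both are odd, their product is $\equiv D_{4}(\bm{b})^{2} \equiv 1 \pmod{8}$. Combined with $F(\bm{d})^{2} \equiv 1 \pmod{8}$ this already yields $D_{G}(\bm{a}) \equiv 1 \pmod{8}$, and in particular both $D_{4}(\bm{b})D_{4}(\bm{c})$ and $F(\bm{d})^{2}$ lie in $\{1,9\} \pmod{16}$.

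Because two elements of $\{1,9\}$ multiply to $1$ modulo $16$ precisely when they are equal, the decisive remaining step is the synchronization
\[
D_{4}(\bm{b}) D_{4}(\bm{c}) \equiv F(\bm{d})^{2} \pmod{16}.
\]
Writing each side as $1+8\beta$ and $1+8\gamma$ with $\beta,\gamma \in \{0,1\}$, this amounts to showing $\beta = \gamma$, after which $D_{G}(\bm{a}) = D_{4}(\bm{b})D_{4}(\bm{c}) \cdot F(\bm{d})^{2} \equiv (1+8\beta)(1+8\gamma) \equiv 1 \pmod{16}$. Here $\gamma = 1$ means $F(\bm{d}) \equiv \pm 3 \pmod{8}$, while $\beta = 1$ means $D_{4}(\bm{b})D_{4}(\bm{c}) \equiv 9 \pmod{16}$. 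I would compute each bit explicitly: expanding $F = f(d_{0}-d_{2},\dots)\,f(d_{5}+d_{7},\dots)$ with $f(x,y,z,w)=x^{2}+y^{2}-z^{2}-w^{2}$ reduces $\gamma$ to a parity count of how many of the relevant arguments are $\equiv 2 \pmod 4$; and writing $\bm{b} = \bm{P}+\bm{Q}$, $\bm{c} = \bm{P}-\bm{Q}$ with $P_{i}=a_{i}+a_{i+8}$, $Q_{i}=a_{i+4}+a_{i+12}$, the product $D_{4}(\bm{P}+\bm{Q})D_{4}(\bm{P}-\bm{Q}) = \prod_{j}\{L_{j}(\bm{P})^{2}-L_{j}(\bm{Q})^{2}\}$ (over the four characters $L_{j}$ of $\mathrm{C}_{4}$) reduces $\beta$ to a quadratic expression in the $a_{i}$ modulo $4$.

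The main obstacle is showing that these two expressions for $\beta$ and $\gamma$ coincide. The mechanism must be the common origin of $\bm{b},\bm{c},\bm{d}$ in the single vector $\bm{a}$, together with the parity links $b_{i} \equiv c_{i} \equiv d_{i}+d_{i+4} \pmod 2$ of Remark~\ref{rem:2.3}, which I expect to force both bits to equal the same explicit function of the residues $a_{i} \bmod 4$. This mod-$16$ matching, rather than the comparatively routine mod-$8$ bound, is where the real work lies; the final conclusion $D_{G}(\bm{a}) \equiv 1 \pmod{16}$, hence $S(\mathrm{C}_{2}^{2} \rtimes \mathrm{C}_{4}) \cap \mathbb{Z}_{\rm odd} \subset \{16m+1 \mid m \in \mathbb{Z}\}$, then follows immediately.
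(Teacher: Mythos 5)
Your overall architecture matches the paper's: both reduce the lemma to showing that, when $D_G(\bm a)$ is odd, the two factors $D_4(\bm b)D_4(\bm c)$ and $F(\bm d)^2$ are each $\equiv 1 \pmod 8$ and moreover agree modulo $16$, so that their product is $\equiv 1 \pmod{16}$. Your preliminary reductions are sound: oddness does force exactly one of $b_0+b_2$, $b_1+b_3$ to be odd, the parity links of Remark~\ref{rem:2.3} do give $D_4(\bm b)\equiv D_4(\bm c)\pmod 8$, and two residues in $\{1,9\}$ multiply to $1$ modulo $16$ exactly when they coincide.

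However, the decisive step --- the synchronization $D_4(\bm b)D_4(\bm c)\equiv F(\bm d)^2\pmod{16}$ --- is not proved in your proposal; you reduce the lemma to it and then write that you ``expect'' the two bits to agree because $\bm b,\bm c,\bm d$ all come from $\bm a$. That expectation is where the entire content of the lemma lives, and as written there is a genuine gap. The paper closes it by exhibiting the common bit explicitly: Lemma~\ref{lem:3.3} (quoting earlier work) gives $D_4(\bm b)D_4(\bm c)\equiv 1-8(d_0d_2+d_4d_6+d_1d_3+d_5d_7)\pmod{16}$, and Lemma~\ref{lem:3.4} shows $F(\bm d)^2=m_0^2m_1^2$ satisfies the same congruence by first noting $m_0^2\equiv m_1^2\equiv 1\pmod 8$ and then computing $m_0^2-m_1^2=(m_0+m_1)(m_0-m_1)\equiv 8(d_0d_2+d_4d_6+d_1d_3+d_5d_7)\pmod{16}$ from the explicit formulas for $m_0\pm m_1$. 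Your sketch of how you would compute $\gamma$ (a parity count of arguments $\equiv 2\pmod 4$) and $\beta$ (a quadratic expression in the $a_i$ modulo $4$) does not carry this out, and nothing in your write-up certifies that the two expressions agree; until that identity is verified, the conclusion $D_G(\bm a)\equiv 1\pmod{16}$ does not follow.
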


\begin{lem}\label{lem:3.2}
We have $S({\rm C}_{2}^{2} \rtimes {\rm C}_{4}) \cap 2 \mathbb{Z} \subset 2^{14} \mathbb{Z}$. 
\end{lem}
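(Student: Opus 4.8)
The plan is to exploit the factorization $D_{G}(\bm{a}) = D_{4}(\bm{b}) D_{4}(\bm{c}) F(\bm{d})^{2}$ together with Lemma~\ref{lem:2.5}, bounding below the power of $2$ dividing each factor whenever that factor is even. Write $v(x)$ for the exponent of $2$ in $x$. I want to show that an even value of $D_{G}(\bm{a})$ forces $v(D_{4}(\bm{b})) \ge 4$, $v(D_{4}(\bm{c})) \ge 4$, and $v(F(\bm{d})) \ge 3$; since $F(\bm{d})$ enters squared, these combine to give $v(D_{G}(\bm{a})) \ge 4 + 4 + 2 \cdot 3 = 14$, which is exactly the assertion $D_{G}(\bm{a}) \in 2^{14}\mathbb{Z}$. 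By Lemma~\ref{lem:2.5}, if $D_{G}(\bm{a})$ is even then all three of $D_{4}(\bm{b})$, $D_{4}(\bm{c})$, $F(\bm{d})$ are even, so the three bounds are simultaneously available.

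For the two $D_{4}$ factors I would use the identity from the proof of Lemma~\ref{lem:2.4}, namely $D_{4}(x_{0},x_{1},x_{2},x_{3}) = \{ (x_{0}+x_{2})^{2} - (x_{1}+x_{3})^{2} \} \{ (x_{0}-x_{2})^{2} + (x_{1}-x_{3})^{2} \}$. Since $D_{4}$ is even exactly when $x_{0}+x_{2}$ and $x_{1}+x_{3}$ share the same parity, I split into two cases. If both are even, then $x_{0}-x_{2}$ and $x_{1}-x_{3}$ are even as well, so each bracketed factor is divisible by $4$ and $v(D_{4}) \ge 4$. If both are odd, then $(x_{0}+x_{2})^{2} - (x_{1}+x_{3})^{2}$ is a difference of two odd squares, hence divisible by $8$, while $(x_{0}-x_{2})^{2} + (x_{1}-x_{3})^{2}$ is a sum of two odd squares, hence exactly divisible by $2$; again $v(D_{4}) \ge 3 + 1 = 4$. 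Applying this to $\bm{b}$ and $\bm{c}$ gives the first two bounds.

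The delicate point, and the step I expect to be the main obstacle, is the bound $v(F(\bm{d})) \ge 3$. Here $F(\bm{d}) = f_{1} f_{2}$ with $f_{1} = f(d_{0}-d_{2}, d_{1}-d_{3}, d_{4}-d_{6}, d_{5}-d_{7})$ and $f_{2} = f(d_{5}+d_{7}, d_{0}+d_{2}, d_{1}+d_{3}, d_{4}+d_{6})$, and both are even when $F(\bm{d})$ is. A short check of $f(x,y,z,w) = x^{2}+y^{2}-z^{2}-w^{2}$ modulo $8$ shows that an even value of $f$ satisfies $v(f) = 1$ \emph{precisely} when exactly one of the unordered pairs $\{ x,y \}$ and $\{ z,w \}$ consists of two odd entries (the all-even case and the two ``cross'' cases of two odd entries give $v(f) \ge 2$, and four odd entries give $v(f) \ge 3$). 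The key observation is that the argument parities of $f_{2}$ are a cyclic shift of those of $f_{1}$: putting $q_{0} := d_{0}+d_{2}$, $q_{1} := d_{1}+d_{3}$, $q_{2} := d_{4}+d_{6}$, $q_{3} := d_{5}+d_{7}$ modulo $2$, the four argument parities of $f_{1}$ are $(q_{0},q_{1},q_{2},q_{3})$ while those of $f_{2}$ are $(q_{3},q_{0},q_{1},q_{2})$. The patterns forcing $v(f_{1}) = 1$ are $(1,1,0,0)$ and $(0,0,1,1)$, whereas those forcing $v(f_{2}) = 1$ are $(1,0,0,1)$ and $(0,1,1,0)$, and these two sets are disjoint. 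Hence $f_{1}$ and $f_{2}$ cannot both have valuation $1$, so $v(f_{1}) + v(f_{2}) \ge 3$, i.e.\ $v(F(\bm{d})) \ge 3$. Combining the three bounds yields $v(D_{G}(\bm{a})) \ge 14$, completing the proof.
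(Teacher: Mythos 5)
Your proof is correct, and while it shares the paper's skeleton (factor $D_{G}(\bm{a}) = D_{4}(\bm{b}) D_{4}(\bm{c}) F(\bm{d})^{2}$, invoke Lemma~\ref{lem:2.5} to see all three factors are even, then add up $2$-adic valuations as $4+4+2\cdot 3 = 14$), you execute both key valuation bounds differently. For $v(D_{4}) \ge 4$ the paper simply cites $S({\rm C}_{4}) = \mathbb{Z}_{\rm odd} \cup 2^{4}\mathbb{Z}$ from the literature, whereas you reprove it on the spot from the explicit factorization of $D_{4}$; this makes the argument self-contained at essentially no cost. For $v(F(\bm{d})) \ge 3$ the paper's Lemma~\ref{lem:3.5} splits on the parity of $b_{0}+b_{2}$ and computes $m_{0}$, $m_{1}$ and $m_{0}-m_{1}$ modulo $4$ explicitly, while you classify the parity patterns of the arguments of $f$ for which $v(f)=1$ (exactly the patterns $(1,1,0,0)$ and $(0,0,1,1)$) and observe that the argument parities of $f_{2}$ are a cyclic shift of those of $f_{1}$, so the two bad pattern sets are disjoint and $v(f_{1})+v(f_{2})\ge 3$. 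The two case analyses are equivalent in content --- your four ``bad'' patterns are precisely the paper's case $b_{0}+b_{2}\equiv b_{1}+b_{3}\equiv 1 \pmod 2$ --- but your version isolates the structural reason ($f_{1}$ and $f_{2}$ cannot simultaneously have valuation exactly $1$) more transparently than the paper's algebraic manipulation of $m_{0}\pm m_{1}$. One cosmetic remark: when some factor vanishes the valuation bookkeeping degenerates, but then $D_{G}(\bm{a})=0\in 2^{14}\mathbb{Z}$ trivially, so nothing is lost.
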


Let $m_{0} := f(d_{0} - d_{2}, d_{1} - d_{3}, d_{4} - d_{6}, d_{5} - d_{7})$ and $m_{1} := f(d_{5} + d_{7}, d_{0} + d_{2}, d_{1} + d_{3}, d_{4} + d_{6})$. 
To prove Lemma~$\ref{lem:3.1}$, we use the following two lemmas. 

\begin{lem}\label{lem:3.3}
Suppose that $b_{0} + b_{2} \not\equiv b_{1} + b_{3} \pmod{2}$. 
Then we have 
$$
D_{4}(\bm{b}) D_{4}(\bm{c}) \equiv 1 - 8 (d_{0} d_{2} + d_{4} d_{6} + d_{1} d_{3} + d_{5} d_{7}) \pmod{16}. 
$$
\end{lem}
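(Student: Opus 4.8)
The plan is to start from the explicit factorization of $D_{4}$ recorded in the proof of Lemma~\ref{lem:2.4},
$$D_{4}(x_{0},x_{1},x_{2},x_{3}) = \left\{ (x_{0}+x_{2})^{2} - (x_{1}+x_{3})^{2} \right\}\left\{ (x_{0}-x_{2})^{2} + (x_{1}-x_{3})^{2} \right\},$$
and apply it to both $\bm{b}$ and $\bm{c}$. Writing $u_{i} := a_{i} + a_{i+8}$ and $v_{i} := a_{i+4} + a_{i+12}$ for $0 \le i \le 3$, so that $b_{i} = u_{i} + v_{i}$ and $c_{i} = u_{i} - v_{i}$, the four quadratic factors of $D_{4}(\bm{b})\,D_{4}(\bm{c})$ fall into two natural $b$--$c$ pairs sharing the same $u$-part. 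By Remark~\ref{rem:2.3} the hypothesis $b_{0} + b_{2} \not\equiv b_{1} + b_{3} \pmod{2}$ transfers to $\bm{c}$, and it forces all four factors to be odd; in particular $D_{4}(\bm{b})\,D_{4}(\bm{c})$ is odd, and it is meaningful to reduce it modulo $16$.

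Next I would collapse each pair by difference of squares. Pairing the first factor of $D_{4}(\bm{b})$ with the first factor of $D_{4}(\bm{c})$ and setting $U := u_{0}+u_{2}$, $V := v_{0}+v_{2}$, $W := u_{1}+u_{3}$, $X := v_{1}+v_{3}$, their product becomes $p^{2} - q^{2}$ with $p := U^{2} + V^{2} - W^{2} - X^{2}$ and $q := 2(UV - WX)$; pairing the second factors and setting $U' := u_{0}-u_{2}$, and so on, their product becomes $\hat{p}^{2} + \hat{q}^{2}$ with $\hat{p} := U'^{2} - V'^{2} + W'^{2} - X'^{2}$ and $\hat{q} := 2(V'W' - U'X')$. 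Here $p, \hat{p}$ are odd while $q, \hat{q}$ are even. Expanding $(p^{2} - q^{2})(\hat{p}^{2} + \hat{q}^{2})$ modulo $16$ and discarding $q^{2}\hat{q}^{2}$ (divisible by $16$), the relations $p^{2} \equiv \hat{p}^{2} \equiv 1 \pmod{8}$ reduce the product to $(p\hat{p})^{2} + \hat{q}^{2} - q^{2} \pmod{16}$. The hypothesis enters precisely here: $b_{0}+b_{2} \not\equiv b_{1}+b_{3}$ says exactly one of $U+W$, $V+X$ is even, and a short parity computation then yields $q^{2} \equiv \hat{q}^{2} \pmod{16}$, so that
$$D_{4}(\bm{b})\,D_{4}(\bm{c}) \equiv (p\hat{p})^{2} \pmod{16}.$$

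To finish I would use that every odd $m$ satisfies $m^{2} \equiv 1 - 8\,\psi(m) \pmod{16}$, where $\psi(m) := \tfrac{m^{2}-1}{8} \bmod 2$, and that $\psi$ is additive, $\psi(p\hat{p}) \equiv \psi(p) + \psi(\hat{p}) \pmod{2}$, because $(p^{2}-1)(\hat{p}^{2}-1) \equiv 0 \pmod{64}$. It then remains to show
$$\psi(p) + \psi(\hat{p}) \equiv d_{0} d_{2} + d_{1} d_{3} + d_{4} d_{6} + d_{5} d_{7} \pmod{2}.$$
Writing $s_{1} := \tfrac{p + \hat{p}}{2}$ and $t_{1} := \tfrac{p - \hat{p}}{2}$ (integers of opposite parity, as $p, \hat{p}$ are odd), one has $\psi(p) + \psi(\hat{p}) \equiv \tfrac{s_{1}^{2} + t_{1}^{2} - 1}{4} \pmod{2}$, which equals $\tfrac{n}{2} \bmod 2$ for the even one $n$ of $s_{1}, t_{1}$. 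Both $s_{1}$ and $t_{1}$ are explicit quadratic expressions in $u_{i}, v_{i}$, and evaluating the relevant one modulo $4$ is the main obstacle.

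That last step is where the real work lies. The even-square contributions a priori involve the $a_{i}$ through $u_{i}^{2} - d_{i}^{2} = 4 a_{i} a_{i+8}$, so one must verify these do not survive. The mechanism is that contributions carrying a factor $8$ depend only on parities, where $u_{i} \equiv d_{i}$ and $v_{i} \equiv d_{i+4} \pmod{2}$, while contributions carrying a factor $4$ need their arguments only modulo $4$, where $u_{i}^{2} \equiv d_{i}^{2} \pmod{4}$; the leftover ``half-sum of parities'' term then reorganizes, by a case check on how many of the relevant $u_{i}, v_{i}$ are odd, into $u_{0} u_{2} + u_{1} u_{3} + v_{0} v_{2} + v_{1} v_{3}$. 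Since this is congruent to $d_{0} d_{2} + d_{1} d_{3} + d_{4} d_{6} + d_{5} d_{7} \pmod{2}$, the claimed congruence follows. I expect the bookkeeping in this final reduction, namely tracking which contributions are genuinely $d$-expressions modulo $16$ and showing the $a$-dependence cancels, to be the most delicate part of the argument.
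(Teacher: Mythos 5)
Your proposal is correct, but it takes a genuinely different route from the paper: the paper gives no self-contained argument for Lemma~\ref{lem:3.3} at all, deducing it in one line from \cite[Lemmas~2.10~(1) and 4.3]{https://doi.org/10.48550/arxiv.2211.01597}, whereas you prove it directly from the circulant factorization of $D_{4}$. Every step of your outline checks out: the pairing into $p^{2}-q^{2}$ and $\hat{p}^{2}+\hat{q}^{2}$ is the right Brahmagupta--Fibonacci bookkeeping, the reduction to $(p\hat{p})^{2}+\hat{q}^{2}-q^{2}$ modulo $16$ is valid since $(p^{2}-1)\hat{q}^{2}$ and $(\hat{p}^{2}-1)q^{2}$ are divisible by $32$, and $q^{2}\equiv\hat{q}^{2}\pmod{16}$ reduces to $(U+W)(V+X)\equiv 0\pmod 2$, which is exactly the hypothesis. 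I would only point out that the final step you flag as ``the main obstacle'' is less delicate than you fear, because $p$ and $\hat{p}$ are polynomials in the $u_{i},v_{i}$ alone (equivalently in $\bm{b},\bm{c}$), so there is no hidden $a$-dependence to cancel; one computes directly
\begin{align*}
s_{1} &= (u_{0}^{2}+u_{2}^{2})-(v_{1}^{2}+v_{3}^{2})+2(v_{0}v_{2}-u_{1}u_{3}), \qquad
t_{1} = (v_{0}^{2}+v_{2}^{2})-(u_{1}^{2}+u_{3}^{2})+2(u_{0}u_{2}-v_{1}v_{3}),
\end{align*}
and whichever of these is even, say $s_{1}$, satisfies $s_{1}/2\equiv u_{0}u_{2}+u_{1}u_{3}+v_{0}v_{2}+v_{1}v_{3}\pmod 2$ because $\tfrac{1}{2}\{(u_{0}+u_{2})^{2}-(v_{1}+v_{3})^{2}\}$ is then a product of two even numbers divided by $2$, hence even. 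Since $u_{i}\equiv d_{i}$ and $v_{i}\equiv d_{i+4}\pmod 2$, this is $d_{0}d_{2}+d_{1}d_{3}+d_{4}d_{6}+d_{5}d_{7}$ modulo $2$, which closes the argument. The trade-off is clear: the paper's citation is short but opaque, while your computation is longer but self-contained and makes visible exactly where the parity hypothesis is used.
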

\begin{proof}
From \cite[Lemmas~2.10 (1) and 4.3]{https://doi.org/10.48550/arxiv.2211.01597}, 
the lemma is obtained. 
\end{proof}

\begin{lem}\label{lem:3.4}
Suppose that $b_{0} + b_{2} \not\equiv b_{1} + b_{3} \pmod{2}$. 
Then we have 
$$
F(\bm{d})^{2} \equiv 1 - 8 (d_{0} d_{2} + d_{4} d_{6} + d_{1} d_{3} + d_{5} d_{7}) \pmod{16}. 
$$
\end{lem}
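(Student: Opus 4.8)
The plan is to compute $F(\bm{d})^{2} = (m_{0}m_{1})^{2}$ modulo $16$ directly, exploiting that for any integer $n$ the residue $n^{2} \bmod 16$ is determined by $n \bmod 8$; thus it suffices to pin down $m_{0}m_{1} \bmod 8$. First I would record that under the hypothesis both $m_{0}$ and $m_{1}$ are odd: expanding $f$ gives $m_{0} \equiv (d_{0}-d_{2}) + (d_{1}-d_{3}) + (d_{4}-d_{6}) + (d_{5}-d_{7}) \equiv d_{0} + d_{1} + \cdots + d_{7} \pmod 2$, and likewise for $m_{1}$. By Remark~\ref{rem:2.3} the hypothesis $b_{0}+b_{2} \not\equiv b_{1}+b_{3}$ reads $d_{0}+d_{2}+d_{4}+d_{6} \not\equiv d_{1}+d_{3}+d_{5}+d_{7} \pmod 2$, so $d_{0}+\cdots+d_{7}$ is odd and hence $m_{0},m_{1}$ are both odd.

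For the main computation I would group the eight variables into the four pairs occurring in $m_{0},m_{1}$, writing $q_{i}$ for the differences $d_{0}-d_{2}, d_{1}-d_{3}, d_{4}-d_{6}, d_{5}-d_{7}$ and $p_{i}$ for the matching sums $d_{0}+d_{2}, d_{1}+d_{3}, d_{4}+d_{6}, d_{5}+d_{7}$, so that $m_{0} = q_{0}^{2}+q_{1}^{2}-q_{2}^{2}-q_{3}^{2}$ and $m_{1} = p_{0}^{2}-p_{1}^{2}-p_{2}^{2}+p_{3}^{2}$. The key elementary fact is that for any integer $x$ one has $x^{2} \equiv o + 4h \pmod 8$, where $o \in \{0,1\}$ is the parity of $x$ and $h \in \{0,1\}$ indicates whether $x \equiv 2 \pmod 4$. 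Since $p_{i} \equiv q_{i} \pmod 2$, applying this termwise gives $m_{0} \equiv \alpha + 4\beta$ and $m_{1} \equiv \alpha' + 4\beta' \pmod 8$, where $\alpha = o_{0}+o_{1}-o_{2}-o_{3}$, $\alpha' = o_{0}-o_{1}-o_{2}+o_{3}$ with $o_{i}$ the common parity of $q_{i}$ and $p_{i}$, and $\beta,\beta'$ are the sums of the corresponding indicators $h_{i},h_{i}'$.

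Multiplying and using that $\alpha,\alpha'$ are odd (which holds because $\sum o_{i} \equiv m_{0} \equiv 1 \pmod 2$) collapses the product to $m_{0}m_{1} \equiv \alpha\alpha' + 4(\beta+\beta') \pmod 8$. Two observations then finish the argument. First, $\alpha\alpha' = (o_{0}-o_{2})^{2} - (o_{1}-o_{3})^{2}$; since $\sum o_{i}$ is odd, exactly one of the indicators $(o_{0}-o_{2})^{2}, (o_{1}-o_{3})^{2}$ equals $1$, so $\alpha\alpha' = \pm 1$ and $(\alpha\alpha')^{2} = 1$. Second, for each pair the contribution $h_{i}+h_{i}'$ is even except when both entries of the pair are odd, in which case it is odd; hence $\beta+\beta' \equiv N \pmod 2$, where $N$ is the number of pairs having both entries odd. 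As a product $d_{a}d_{b}$ is odd precisely when both factors are odd, $N \equiv d_{0}d_{2}+d_{1}d_{3}+d_{4}d_{6}+d_{5}d_{7} \pmod 2$. Combining yields $m_{0}m_{1} \equiv \alpha\alpha' + 4(d_{0}d_{2}+d_{4}d_{6}+d_{1}d_{3}+d_{5}d_{7}) \pmod 8$, and squaring (using $\alpha\alpha' = \pm 1$ and $8t \equiv -8t \pmod{16}$) gives $F(\bm{d})^{2} \equiv 1 - 8(d_{0}d_{2}+d_{4}d_{6}+d_{1}d_{3}+d_{5}d_{7}) \pmod{16}$.

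I expect the main obstacle to be the second observation above: controlling the degree-two correction, that is, relating the mod-$4$ information carried by the indicators $h_{i},h_{i}'$ of the differences and sums to the cross-term sum $d_{0}d_{2}+d_{1}d_{3}+d_{4}d_{6}+d_{5}d_{7}$. The clean statement ``$h_{i}+h_{i}'$ is odd if and only if both entries of the $i$-th pair are odd'' comes from comparing $d_{a}-d_{b}$ and $d_{a}+d_{b}$ modulo $4$, and this is where the bookkeeping that does not use the hypothesis is concentrated; once it is in place, the oddness of $\sum o_{i}$ supplied by the hypothesis forces $\alpha\alpha' = \pm 1$ and the result falls out.
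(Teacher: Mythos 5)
Your argument is correct, and it reaches the congruence by a genuinely different route from the paper's. Both proofs start identically: the hypothesis gives $d_{0}+\cdots+d_{7}$ odd, hence $m_{0},m_{1}$ odd. From there the paper works with the squares themselves: it notes $m_{0}^{2}\equiv m_{1}^{2}\equiv 1\pmod 8$ and computes $m_{0}^{2}-m_{1}^{2}=(m_{0}+m_{1})(m_{0}-m_{1})\equiv 8(d_{0}d_{2}+d_{4}d_{6}+d_{1}d_{3}+d_{5}d_{7})\pmod{16}$ from explicit closed forms for $m_{0}\pm m_{1}$, after which $F(\bm{d})^{2}=m_{0}^{2}m_{1}^{2}\equiv m_{0}^{2}\{m_{0}^{2}-8(\cdots)\}\equiv 1-8(\cdots)$. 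You instead pin down the product $m_{0}m_{1}$ modulo $8$ by a termwise $2$-adic analysis of the eight squares $q_{i}^{2},p_{i}^{2}$ (writing each as $o_{i}+4h_{i}$ mod $8$), and your case check that $h_{i}+h_{i}'$ is odd precisely when both entries of the $i$-th pair are odd --- coming from $(d_{a}-d_{b})+(d_{a}+d_{b})=2d_{a}$ and $(d_{a}+d_{b})-(d_{a}-d_{b})=2d_{b}$ modulo $4$ --- is what replaces the paper's algebraic identities for $m_{0}\pm m_{1}$; the hypothesis enters your version only to make $\alpha,\alpha'$ odd and force $\alpha\alpha'=\pm 1$. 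The paper's computation is more compact on the page; yours is more elementary and makes visible exactly which residues of the individual $d_{i}$ modulo $4$ the congruence actually depends on. Both are sound and both reduce the final squaring to the observation that $n^{2}\bmod 16$ depends only on $n\bmod 8$ (explicitly in your write-up, implicitly in the paper's last display).
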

\begin{proof}
From $b_{0} + b_{2} \not\equiv b_{1} + b_{3} \pmod{2}$, 
we have 
$$
d_{0} + d_{2} + d_{4} + d_{6} \not\equiv d_{1} + d_{3} + d_{5} + d_{7} \pmod{2}. 
$$
From this, $m_{0} \equiv m_{1} \equiv d_{0} + d_{1} + \cdots + d_{7} \equiv 1 \pmod{2}$. 
Therefore, $m_{0}^{2} \equiv m_{1}^{2} \equiv 1 \pmod{8}$. 
Also, from 
\begin{align*}
m_{0} + m_{1} 
&= 2 (d_{0} + d_{2})^{2} - 2 (d_{4} + d_{6})^{2} + 4 (- d_{0} d_{2} + d_{4} d_{6} - d_{1} d_{3} + d_{5} d_{7}) \\ 
&= 2 (d_{0} + d_{2} + d_{4} + d_{6}) (d_{0} + d_{2} - d_{4} - d_{6}) + 4 (- d_{0} d_{2} + d_{4} d_{6} - d_{1} d_{3} + d_{5} d_{7}), \\ 
m_{0} - m_{1} 
&= 2 (d_{1} + d_{3})^{2} - 2 (d_{5} + d_{7})^{2} + 4 (- d_{0} d_{2} + d_{4} d_{6} - d_{1} d_{3} + d_{5} d_{7}) \\ 
&= 2 (d_{1} + d_{3} + d_{5} + d_{7}) (d_{1} + d_{3} - d_{5} - d_{7}) + 4 (- d_{0} d_{2} + d_{4} d_{6} - d_{1} d_{3} + d_{5} d_{7}), 
\end{align*}
we have 
\begin{align*}
m_{0}^{2} - m_{1}^{2} 
&= (m_{0} + m_{1}) (m_{0} - m_{1}) \\ 
&\equiv 8 \left\{ (d_{0} + d_{2} + d_{4} + d_{6}) (d_{0} + d_{2} - d_{4} - d_{6}) + (d_{1} + d_{3} + d_{5} + d_{7}) (d_{1} + d_{3} - d_{5} - d_{7}) \right\} \\ 
&\qquad \times (- d_{0} d_{2} + d_{4} d_{6} - d_{1} d_{3} + d_{5} d_{7}) \\ 
&\equiv 8 (d_{0} d_{2} + d_{4} d_{6} + d_{1} d_{3} + d_{5} d_{7}) \pmod{16}. 
\end{align*}
From the above, 
we have 
\begin{align*}
F(\bm{d})^{2} 
&= m_{0}^{2} m_{1}^{2} \\ 
&\equiv m_{0}^{2} \left\{ m_{0}^{2} - 8 (d_{0} d_{2} + d_{4} d_{6} + d_{1} d_{3} + d_{5} d_{7}) \right\} \\ 
&\equiv 1 - 8 (d_{0} d_{2} + d_{4} d_{6} + d_{1} d_{3} + d_{5} d_{7}) \pmod{16}. 
\end{align*}
\end{proof}

\begin{proof}[Proof of Lemma~$\ref{lem:3.1}$]
Suppose that $D_{G}(\bm{a}) = D_{4}(\bm{b}) D_{4}(\bm{c}) F(\bm{d})^{2} \in \mathbb{Z}_{\rm odd}$. 
Then, $D_{4}(\bm{b}) \in \mathbb{Z}_{\rm odd}$. 
From this and Lemma~$\ref{lem:2.4}$ (1), we have $b_{0} + b_{2} \not\equiv b_{1} + b_{3} \pmod{2}$. 
Therefore, from Lemmas~$\ref{lem:3.3}$ and $\ref{lem:3.4}$, we have 
\begin{align*}
D_{G}(\bm{a}) = D_{4}(\bm{b}) D_{4}(\bm{c}) F(\bm{d})^{2} \equiv \left\{ 1 - 8 (d_{0} d_{2} + d_{4} d_{6} + d_{1} d_{3} + d_{5} d_{7}) \right\}^{2} \equiv 1 \pmod{16}. 
\end{align*}
\end{proof}

To prove Lemma~$\ref{lem:3.2}$, we use the following lemma. 

\begin{lem}\label{lem:3.5}
Suppose that $b_{0} + b_{2} \equiv b_{1} + b_{3} \pmod{2}$. 
Then we have $F(\bm{d}) \equiv 0 \pmod{8}$. 
\end{lem}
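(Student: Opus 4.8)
The plan is to exploit the factorization $F(\bm{d}) = m_{0} m_{1}$ and to argue modulo $8$. First I would rewrite the hypothesis in terms of the $d_{i}$. By Remark~\ref{rem:2.3} we have $b_{i} \equiv d_{i} + d_{i + 4} \pmod{2}$, so $b_{0} + b_{2} \equiv b_{1} + b_{3} \pmod{2}$ is equivalent to
\[
d_{0} + d_{2} + d_{4} + d_{6} \equiv d_{1} + d_{3} + d_{5} + d_{7} \pmod{2}.
\]
Because $F(\bm{d}) = m_{0} m_{1}$, it then suffices to prove that one of the two factors is divisible by $4$ while the other is even; this immediately yields $F(\bm{d}) \equiv 0 \pmod{8}$.

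To control the factors modulo $4$, I would use that $n^{2} \equiv 0 \pmod{4}$ when $n$ is even and $n^{2} \equiv 1 \pmod{4}$ when $n$ is odd. Let $\alpha, \beta, \gamma, \delta \in \{ 0, 1 \}$ denote the parities of $d_{0} + d_{2}$, $d_{1} + d_{3}$, $d_{4} + d_{6}$, $d_{5} + d_{7}$, respectively; note that $d_{i} - d_{j}$ and $d_{i} + d_{j}$ always share the same parity. Applying this square reduction termwise to the definitions of $m_{0}$ and $m_{1}$ gives
\[
m_{0} \equiv \alpha + \beta - \gamma - \delta \pmod{4}, \qquad m_{1} \equiv \alpha - \beta - \gamma + \delta \pmod{4}.
\]
In this notation the rewritten hypothesis becomes $\alpha + \gamma \equiv \beta + \delta \pmod{2}$, so that $\alpha + \beta + \gamma + \delta$ is even and both $m_{0}$ and $m_{1}$ are even.

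To finish, I would set $u := \alpha - \gamma$ and $v := \beta - \delta$, so that $m_{0} \equiv u + v$ and $m_{1} \equiv u - v \pmod{4}$. The hypothesis forces $u \equiv v \pmod{2}$, and since $u, v \in \{ -1, 0, 1 \}$ this leaves only $u = v$ or $u = - v$. If $u = v$ then $m_{1} \equiv 0 \pmod{4}$, while if $u = - v$ then $m_{0} \equiv 0 \pmod{4}$; in either case one factor is divisible by $4$ and the other is even, giving $F(\bm{d}) = m_{0} m_{1} \equiv 0 \pmod{8}$. (Alternatively, one may simply list the even-weight parity vectors $(\alpha, \beta, \gamma, \delta)$ and check the divisibility in each case.) The point that needs care is that $m_{0}$ and $m_{1}$ are only pinned down modulo $4$, which a priori determines their product only modulo $4$; the argument must therefore isolate a factor that is genuinely $\equiv 0 \pmod{4}$ and combine it with the evenness of the other, rather than multiplying residues blindly.
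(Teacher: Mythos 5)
Your proof is correct and follows essentially the same route as the paper: both factor $F(\bm{d}) = m_{0} m_{1}$ and control the two factors modulo $4$ via the parities of the $d_{i} \pm d_{j}$, concluding that one factor is divisible by $4$ while the other is even. The only difference is organizational --- your reduction $n^{2} \equiv (n \bmod 2) \pmod{4}$ handles uniformly what the paper splits into the two cases $b_{0} + b_{2} \equiv b_{1} + b_{3} \equiv 0$ and $\equiv 1 \pmod{2}$, and your closing caveat about not multiplying mod-$4$ residues blindly is exactly the point the paper's case analysis is designed to address.
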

\begin{proof}
First, we consider the case $b_{0} + b_{2} \equiv b_{1} + b_{3} \equiv 0 \pmod{2}$. 
From 
$$
d_{0} + d_{2} + d_{4} + d_{6} \equiv d_{1} + d_{3} + d_{5} + d_{7} \equiv 0 \pmod{2}, 
$$
we have 
$$
m_{0} = (d_{0} - d_{2} + d_{4} - d_{6}) (d_{0} - d_{2} - d_{4} + d_{6}) + (d_{1} - d_{3} + d_{5} - d_{7}) (d_{1} - d_{3} - d_{5} + d_{7}) \equiv 0 \pmod{4}. 
$$
In the same way, 
we can obtain $m_{1} \equiv 0 \pmod{4}$. 
Thus we have $F(\bm{d}) = m_{0} m_{1} \equiv 0 \pmod{16}$. 
Next, we consider the case $b_{0} + b_{2} \equiv b_{1} + b_{3} \equiv 1 \pmod{2}$. 
From 
$$
d_{0} + d_{2} + d_{4} + d_{6} \equiv d_{1} + d_{3} + d_{5} + d_{7} \equiv 1 \pmod{2}, 
$$
we have $m_{0} \equiv m_{1} \equiv 0 \pmod{2}$ and 
\begin{align*}
m_{0} - m_{1} 
&= 2 (d_{1} + d_{3})^{2} - 2 (d_{5} + d_{7})^{2} + 4 (- d_{0} d_{2} + d_{4} d_{6} - d_{1} d_{3} + d_{5} d_{7}) \\ 
&\equiv 2 (d_{1}^{2} + d_{3}^{2} - d_{5}^{2} - d_{7}^{2}) \\ 
&\equiv 2 (d_{1} + d_{3} + d_{5} + d_{7}) \\ 
&\equiv 2 \pmod{4}. 
\end{align*}
Thus we have $F(\bm{d}) = m_{0} m_{1} \equiv 0 \pmod{8}$. 
\end{proof}

\begin{proof}[Proof of Lemma~$\ref{lem:3.2}$]
Suppose that $D_{G}(\bm{a}) = D_{4}(\bm{b}) D_{4}(\bm{c}) F(\bm{d})^{2} \in 2 \mathbb{Z}$. 
Then from Lemma~$\ref{lem:2.5}$ and $S({\rm C}_{4}) = \mathbb{Z}_{\rm odd} \cup 2^{4} \mathbb{Z}$ (\cite[Theorem~1.1]{MR2914452}), 
we have $D_{4}(\bm{b}), D_{4}(\bm{c}) \in 2^{4} \mathbb{Z}$. 
Also, from Lemma~$\ref{lem:2.4}$ (1), 
we have $b_{0} + b_{1} + b_{2} + b_{3} \equiv D_{4}(\bm{b}) \equiv 0 \pmod{2}$. 
Thus, from Lemma~$\ref{lem:3.5}$, 
we have $F(\bm{d}) \in 2^{3} \mathbb{Z}$. 
From the above, we have 
$D_{G}(\bm{a}) = D_{4}(\bm{b}) D_{4}(\bm{c}) F(\bm{d})^{2} \in 2^{14} \mathbb{Z}$. 
\end{proof}

\section{Possible integers}
In this section, 
we determine all possible integers. 
Lemmas~$\ref{lem:3.1}$ and $\ref{lem:3.2}$ imply that $S \left( {\rm C}_{2}^{2} \rtimes {\rm C}_{4} \right)$ does not include every integer that is not mentioned in the following Lemma~$\ref{lem:4.1}$. 

\begin{lem}\label{lem:4.1}
For any $m \in \mathbb{Z}$, 
the following are elements of $S({\rm C}_{2}^{2} \rtimes {\rm C}_{4})$: 
\begin{enumerate}
\item[$(1)$] $16 m + 1$; 
\item[$(2)$] $2^{14} (4 m + 1)$; 
\item[$(3)$] $- 2^{14} (4 m + 1)$; 
\item[$(4)$] $2^{15} (2 m + 1)$; 
\item[$(5)$] $2^{16} m$. 
\end{enumerate}
\end{lem}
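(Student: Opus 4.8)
The goal is to show that each of the five families of integers is achievable as $D_G(\bm{a}) = D_4(\bm{b}) D_4(\bm{c}) F(\bm{d})^2$ for suitable integer choices of $a_0, \ldots, a_{15}$. Since I have complete freedom in choosing the $a_i$, the strategy is to treat $\bm{b}$, $\bm{c}$, and $\bm{d}$ as essentially free parameters and engineer each factor independently. The key observation is that the linear change of variables from $\bm{a}$ to $(\bm{b}, \bm{c}, \bm{d})$ is surjective onto the appropriate integer lattice: given any target $\bm{b}, \bm{c} \in \mathbb{Z}^4$ and $\bm{d} \in \mathbb{Z}^8$ subject only to the parity constraint of Remark~\ref{rem:2.3} (namely $b_i \equiv c_i \equiv d_i + d_{i+4} \pmod 2$), one can solve for integer $a_i$. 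So the real task reduces to realizing the desired product as $D_4(\bm{b}) D_4(\bm{c}) F(\bm{d})^2$ with compatible parities.

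\textbf{Plan for each family.} First I would dispense with (1): by Lemma~\ref{lem:2.5} the odd values of $D_G(\bm{a})$ force all three factors odd, and the cleanest route is to pick $\bm{d} = \bm{0}$ (so $F(\bm{d}) = 1$) and realize $16m+1$ through $D_4(\bm{b}) D_4(\bm{c})$ alone. Since $S({\rm C}_4) = \mathbb{Z}_{\rm odd} \cup 2^4 \mathbb{Z}$, I can set $D_4(\bm{c}) = 1$ (take $\bm{c} = (1,0,0,0)$) and need $D_4(\bm{b}) = 16m+1$; I would verify every integer congruent to $1 \pmod{16}$ that is attainable as a $D_4$ value covers $\{16m+1\}$, or more simply exhibit an explicit one-parameter family in $\bm{b}$ giving $D_4(\bm{b}) = 16m+1$ using the factorization $D_4(x_0,x_1,x_2,x_3) = \{(x_0+x_2)^2 - (x_1+x_3)^2\}\{(x_0-x_2)^2 + (x_1-x_3)^2\}$ from Lemma~\ref{lem:2.4}. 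For families (2)--(5), which all lie in $2^{14}\mathbb{Z}$, the factor $2^{14} = 2^4 \cdot 2^4 \cdot (2^3)^2$ suggests forcing $D_4(\bm{b}), D_4(\bm{c}) \in 2^4\mathbb{Z}$ and $F(\bm{d}) \in 2^3\mathbb{Z}$, matching exactly the divisibility extracted in the proof of Lemma~\ref{lem:3.2}. I would then tune the odd cofactors: for (2) and (3) make $F(\bm{d})^2 = 2^6 \cdot (\text{odd square})$ and push the sign and the residue $4m+1$ through $D_4(\bm{b})/2^4$ and $D_4(\bm{c})/2^4$; for (4) allow one more factor of $2$; and for (5) one more still. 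The $\pm$ sign in (3) is handled by noting $D_4$ can take negative values (e.g. via the second factor being a sum of squares but the first factor $(x_0+x_2)^2 - (x_1+x_3)^2$ being negative), so I can flip the sign of the product freely.

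\textbf{Main obstacle.} The principal difficulty is not surjectivity of the coordinate change but the \emph{simultaneous} control of the exact $2$-adic valuation and the odd part of each factor, while respecting the parity linkage $b_i \equiv c_i \equiv d_i + d_{i+4} \pmod 2$. Concretely, to realize (4) I need $v_2(D_G(\bm{a})) = 15$ exactly with odd cofactor $2m+1$, which means distributing a total valuation of $15$ across $v_2(D_4(\bm{b})) + v_2(D_4(\bm{c})) + 2 v_2(F(\bm{d})) = 15$ where each $D_4$ factor is either odd or has valuation $\ge 4$ and $F(\bm{d})$ has valuation $\ge 3$ in the relevant case. The hard part will be checking that the attainable valuation profiles actually hit $14, 15, 16, \ldots$ with the correct odd residues; I expect to resolve this by constructing explicit families such as $\bm{b} = \bm{c}$ with $D_4(\bm{b}) = 2^4 u$ for a controllable odd (or even) $u$, together with a $\bm{d}$ chosen so that $F(\bm{d}) = \pm 2^3 v$ with $v$ odd or even as needed, then reading off the product and confirming the parity constraints are met by a direct substitution back into the definitions of $b_i, c_i, d_i$. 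The bulk of the work is therefore exhibiting and verifying a small number of explicit integer vectors for each of the five cases.
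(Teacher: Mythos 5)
Your reduction rests on a surjectivity claim that is false as stated, and your one concrete instantiation already breaks down. Inverting the linear change of variables gives $a_{i} = \tfrac{1}{4}\left( b_{i} + c_{i} + 2 d_{i} \right)$ and $a_{i + 4} = \tfrac{1}{4}\left( b_{i} - c_{i} + 2 d_{i + 4} \right)$ for $0 \leq i \leq 3$ (with $a_{i+8} = a_{i} - d_{i}$, $a_{i+12} = a_{i+4} - d_{i+4}$), so an integral preimage exists only when $b_{i} + c_{i} + 2 d_{i} \equiv 0 \pmod{4}$ and $b_{i} - c_{i} + 2 d_{i + 4} \equiv 0 \pmod{4}$. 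These are congruences modulo $4$, strictly stronger than the modulo-$2$ conditions of Remark~\ref{rem:2.3}, which are necessary but not sufficient. For instance $b_{0} = c_{0} = 0$, $d_{0} = d_{4} = 1$ satisfies Remark~\ref{rem:2.3} but admits no integer $a_{0}$. So you cannot "treat $\bm{b}, \bm{c}, \bm{d}$ as essentially free parameters" subject only to that remark; any target triple must be re-checked against the finer mod-$4$ constraint, which is exactly the bookkeeping your plan skips.

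Your treatment of case (1) fails concretely on two counts: $F(\bm{0}) = f(0,0,0,0)\, f(0,0,0,0) = 0$, not $1$, so $\bm{d} = \bm{0}$ yields $D_{G}(\bm{a}) = 0$; and $\bm{c} = (1, 0, 0, 0)$ together with $\bm{d} = \bm{0}$ already violates $c_{0} \equiv d_{0} + d_{4} \pmod{2}$, so no integral $\bm{a}$ realizes it. (The paper instead takes $\bm{d} = (1, 0, \ldots, 0)$, which gives $F(\bm{d}) = 1$ and is compatible with $\bm{b} = (4m+1, 4m, 4m, 4m)$, $\bm{c} = (1,0,0,0)$.) For cases (2)--(5) you correctly identify the valuation budget $v_{2}(D_{4}(\bm{b})) + v_{2}(D_{4}(\bm{c})) + 2 v_{2}(F(\bm{d})) \geq 4 + 4 + 6$, but the proof you propose is deferred entirely to "exhibiting and verifying a small number of explicit integer vectors" --- which is the whole content of the lemma. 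The paper's proof is precisely five explicit vectors $\bm{a}$ with the products evaluated; until you produce such vectors (and check the mod-$4$ integrality condition above for each), the argument has a genuine gap rather than a routine verification.
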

\begin{proof}
We obtain (1) from 
\begin{align*}
D_{G}(m + 1, m, m, \ldots, m) 
&= D_{4 \times 2}(2 m + 1, 2 m, 2 m, \ldots, 2 m) F(1, 0, 0, \ldots, 0)^{2} \\ 
&= D_{4}(4 m + 1, 4 m, 4 m, 4 m) D_{4}(1, 0, 0, 0) f(1, 0, 0, 0)^{2} f(0, 1, 0, 0)^{2} \\ 
&= \left\{ (8 m + 1)^{2} - (8 m)^{2} \right\} \cdot 1 \cdot 1^{2} \cdot 1^{2} \\ 
&= 16 m + 1. 
\end{align*}
We obtain (2) from 
\begin{align*}
&D_{G}(m + 1, m, m + 1, m + 1, m, m + 1, m, m, m + 1, m, m, m, m, m - 1, m, m) \\ 
&\quad = D_{4 \times 2}(2 m + 2, 2 m, 2 m + 1, 2 m + 1, 2 m, 2 m, 2 m, 2 m) F(0, 0, 1, 1, 0, 2, 0, 0)^{2} \\ 
&\quad = D_{4}(4 m + 2, 4 m, 4 m + 1, 4 m + 1) D_{4}(2, 0, 1, 1) f(- 1, - 1, 0, 2)^{2} f(2, 1, 1, 0)^{2} \\ 
&\quad = 2 \left\{ (8 m + 3)^{2} - (8 m + 1)^{2} \right\} \cdot 16 \cdot (- 2)^{2} \cdot 4^{2}\\ 
&\quad = 2^{14} (4 m + 1). 
\end{align*}
We obtain (3) from 
\begin{align*}
&D_{G}(m + 1, m + 1, m + 1, m, m, m + 1, m, m, m, m, m, m + 1, m - 1, m , m, m) \\ 
&\quad = D_{4 \times 2}(2 m + 1, 2 m + 1, 2 m + 1, 2 m + 1, 2 m - 1, 2 m + 1, 2 m, 2 m) F(1, 1, 1, - 1, 1, 1, 0, 0)^{2} \\ 
&\quad = D_{4}(4 m, 4 m + 2, 4 m + 1, 4 m + 1) D_{4}(2, 0, 1, 1) f(0, 2, 1, 1)^{2} f(1, 2, 0, 1)^{2} \\ 
&\quad = 2 \left\{ (8 m + 1)^{2} - (8 m + 3)^{2} \right\} \cdot 16 \cdot 2^{2} \cdot 4^{2} \\ 
&\quad = - 2^{14} (4 m + 1). 
\end{align*}
We obtain (4) from 
\begin{align*}
&D_{G}(m + 1, m + 1, m + 1, m + 1, m, m + 1, m + 1, m, m + 1, m, m + 1, m, m, m , m, m) \\ 
&\quad = D_{4 \times 2}(2 m + 2, 2 m + 1, 2 m + 2, 2 m + 1, 2 m, 2 m + 1, 2 m + 1, 2 m) F(0, 1, 0, 1, 0, 1, 1, 0)^{2} \\ 
&\quad = D_{4}(4 m + 2, 4 m + 2, 4 m + 3, 4 m + 1) D_{4}(2, 0, 1, 1) f(0, 0, - 1, 1)^{2} f(1, 0, 2, 1)^{2} \\ 
&\quad = 2 \left\{ (8 m + 5)^{2} - (8 m + 3)^{2} \right\} \cdot 16 \cdot (- 2)^{2} \cdot (- 4)^{2} \\ 
&\quad = 2^{15} (2 m + 1). 
\end{align*}
We obtain (5) from 
\begin{align*}
&D_{G}(m + 1, m, m + 1, m, m, m, m, m, m, m, m, m, m - 1, m , m, m - 1) \\ 
&\quad = D_{4 \times 2}(2 m + 1, 2 m, 2 m + 1, 2 m, 2 m - 1, 2 m, 2 m, 2 m - 1) F(1, 0, 1, 0, 1, 0, 0, 1)^{2} \\ 
&\quad = D_{4}(4 m, 4 m, 4 m + 1, 4 m - 1) D_{4}(2, 0, 1, 1) f(0, 0, 1, - 1)^{2} f(1, 2, 0, 1)^{2} \\ 
&\quad = 2 \left\{ (8 m + 1)^{2} - (8 m - 1)^{2} \right\} \cdot 16 \cdot ( - 2)^{2} \cdot 4^{2} \\ 
&\quad = 2^{16} m. 
\end{align*}
\end{proof}

From Lemmas~$\ref{lem:3.1}$, $\ref{lem:3.2}$ and $\ref{lem:4.1}$, 
Theorem~$\ref{thm:1.1}$ is proved.

\clearpage

\bibliography{reference}
\bibliographystyle{plain}

\medskip
\begin{flushleft}
Faculty of Education, 
University of Miyazaki, 
1-1 Gakuen Kibanadai-nishi, 
Miyazaki 889-2192, 
Japan \\ 
{\it Email address}, Yuka Yamaguchi: y-yamaguchi@cc.miyazaki-u.ac.jp \\ 
{\it Email address}, Naoya Yamaguchi: n-yamaguchi@cc.miyazaki-u.ac.jp 
\end{flushleft}

\end{document}